\DeclareMathOperator{\dom}{dom}
\DeclareMathOperator{\suc}{succ}
\DeclareMathOperator{\diam}{diam}
\newtheorem{ut}{Theorem}
\newtheorem{up}{Proposition}
\newtheorem{ul}[ut]{Lemma}
\newtheorem{uc}[ut]{Corollary}
\newtheorem{ue}{Example}
\newtheorem{uq}{Question}
\theoremstyle{remark}
\theoremstyle{definition}
\newtheorem{ur}{Remark}
\begin{document}

\title{A characterization of Erd\H{o}s space factors}
\author{David S. Lipham}

\address{Department of Mathematics, Auburn University at Montgomery, Montgomery 
AL 36117, United States of America}
\email{dlipham@aum.edu, dsl0003@auburn.edu}

\begin{abstract}We prove that an almost zero-dimensional space $X$ is an Erd\H{o}s space factor if and only if $X$ has a Sierpi\'{n}ski stratification of C-sets. We apply this characterization to spaces which are  countable unions of C-set Erd\H{o}s space factors. 
We show that the Erd\H{o}s space $\mathfrak E$ is unstable  by giving strongly $\sigma$-complete and nowhere $\sigma$-complete examples of almost zero-dimensional $F_{\sigma\delta}$-spaces which are not Erd\H{o}s space factors. This answers a question by Dijkstra and van Mill. \end{abstract}

\maketitle

\section{Introduction}

All spaces under consideration are non-empty, separable and metrizable.

We say that a subset $A$ of a space $X$ is  a \textbf{C-set} in $X$ if $A$ can be
written as an intersection of clopen subsets of $X$. A space $X$ is  \textbf{almost zero-dimensional} if every point $x\in X$ has a neighborhood basis consisting of C-sets  of $X$.  A (separable metric) topology $\mathfrak T$ on $X$  \textbf{witnesses the almost zero-dimensionality
of $X$} if $\mathfrak T$ is coarser than the given topology on $X$, $(X,\mathfrak T)$ is zero-dimensional, and every point of $X$ has a neighborhood basis  consisting of sets that are closed in $(X,\mathfrak T)$.  A space is almost zero-dimensional if and only if there is a topology witnessing the fact \cite[Remark 2.4]{erd}.  

Almost zero-dimensional spaces of positive dimension include the  \textbf{Erd\H{o}s space}  $$\mathfrak E:=\{x\in \ell^2:x_n\in \mathbb Q\text{ for all }n<\omega\}.$$ The almost zero-dimensionality of $\mathfrak E$ is witnessed by the $F_{\sigma\delta}$ topology that $\mathfrak E$ inherits from $\mathbb Q ^\omega$. We call a space $X$ an \textbf{Erd\H{o}s space factor} if there is a space $Y$ such that $X\times Y$ is homeomorphic to $\mathfrak E$.  

In 2010, Dijkstra and van Mill proved:

 \begin{up}[{\cite[Theorem 9.2]{erd}}]For any space $X$ the following are equivalent.
\begin{enumerate}[label=(\alph*)]
\item $X$ is an Erd\H{o}s space factor;
\item $X\times \mathfrak E$ is homeomorphic to $\mathfrak E$;
\item $X$ admits a closed embedding into $\mathfrak E$;
\item  there exists an $F_{\sigma\delta}$ topology witnessing the almost zero-dimensionality of $X$.
\end{enumerate}
\end{up}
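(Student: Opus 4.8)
The plan is to prove the cyclic chain of implications $(a)\Rightarrow(c)\Rightarrow(d)\Rightarrow(b)\Rightarrow(a)$, which is convenient because two of the links are essentially formal and the whole substance of the theorem is concentrated in the single implication $(d)\Rightarrow(b)$. For $(b)\Rightarrow(a)$ one simply takes $Y=\mathfrak E$ in the definition of a factor. For $(a)\Rightarrow(c)$, supposing $X\times Y\cong\mathfrak E$, I fix any $y_0\in Y$; since $Y$ is metrizable the singleton $\{y_0\}$ is closed, so $X\times\{y_0\}$ is closed in $X\times Y$ and homeomorphic to $X$, and transporting it across the homeomorphism exhibits $X$ as a closed subspace of $\mathfrak E$. (The short-cut $(a)\Rightarrow(b)$ can also be obtained directly by an Eilenberg swindle using $\mathfrak E\cong\mathfrak E^{\omega}$: if $\mathfrak E\cong X\times Y$ then $X\times\mathfrak E\cong X\times\mathfrak E^{\omega}\cong X\times X^{\omega}\times Y^{\omega}\cong X^{\omega}\times Y^{\omega}\cong(X\times Y)^{\omega}\cong\mathfrak E$, but the cycle above makes this unnecessary.)

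For $(c)\Rightarrow(d)$ I would let $\mathfrak T$ denote the topology that $\mathfrak E$ inherits from $\mathbb Q^{\omega}$, which by the remark above is an $F_{\sigma\delta}$ topology witnessing the almost zero-dimensionality of $\mathfrak E$, and restrict it to the closed copy of $X$. The restriction is coarser than the $\ell^2$-topology on $X$, is zero-dimensional, and inherits a neighborhood basis of relatively closed sets at each point, so it is a witness; the only genuine point is that it remains $F_{\sigma\delta}$. Here I would exploit that the $\ell^2$-norm is lower semicontinuous with respect to $\mathfrak T$, being the supremum of the $\mathfrak T$-continuous partial norms, so that every $\ell^2$-open subset of $\mathfrak E$ is an $F_{\sigma}$ set in $(\mathfrak E,\mathfrak T)$. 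Consequently the $\ell^2$-closed set $X$ is a $G_{\delta}$ in $(\mathfrak E,\mathfrak T)$, and a $G_{\delta}$ subset of the $F_{\sigma\delta}$-space $(\mathfrak E,\mathfrak T)$ is itself $F_{\sigma\delta}$; restricting $\mathfrak T$ to $X$ therefore produces the required witness.

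The heart of the matter is $(d)\Rightarrow(b)$, and this is where the deep structure of $\mathfrak E$ must be invoked. Given an $F_{\sigma\delta}$ witness $\mathfrak T_X$ on $X$, I would prove $X\times\mathfrak E\cong\mathfrak E$ by verifying that the product satisfies Dijkstra and van Mill's internal topological characterization of Erd\H{o}s space. The product topology $\mathfrak T_X\times\mathfrak T$ is zero-dimensional, coarser than the product of the given topologies, and has a neighborhood basis of $\mathfrak T_X\times\mathfrak T$-closed sets (products of witness-closed sets), so it witnesses the almost zero-dimensionality of $X\times\mathfrak E$; and since the class of $F_{\sigma\delta}$-spaces is closed under finite products, this witness is $F_{\sigma\delta}$. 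Cohesion of $X\times\mathfrak E$ is inherited from the factor $\mathfrak E$ by a slice argument: if $V$ is a neighborhood in $\mathfrak E$ containing no nonempty clopen subset of $\mathfrak E$, then $X\times V$ contains no nonempty clopen subset of $X\times\mathfrak E$, since any such clopen set would have a nonempty clopen slice inside $V$. The remaining clauses of the characterization, those expressing that $\mathfrak E$ is nowhere complete in the appropriate sense, must similarly be shown to pass to the product.

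I expect the principal obstacle to be precisely this last verification: matching $X\times\mathfrak E$ against \emph{every} clause of the characterization. Almost zero-dimensionality, cohesion, and the $F_{\sigma\delta}$ witness are the routine ones, whereas the completeness-type hypotheses are delicate, and it is exactly their delicacy, namely the fact demonstrated in the present paper that mild variants of these conditions fail to characterize $\mathfrak E$, that makes this the crux rather than a formality. Once all clauses are confirmed, the characterization yields $X\times\mathfrak E\cong\mathfrak E$, which is $(b)$, closing the cycle and establishing the equivalence of all four conditions.
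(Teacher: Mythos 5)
You should first note that the paper contains no proof of this statement at all: it is quoted verbatim as \cite[Theorem 9.2]{erd} from the Dijkstra--van Mill memoir, so the only comparison available is against what a complete proof would require. Measured that way, your formal links are fine. $(b)\Rightarrow(a)$ and $(a)\Rightarrow(c)$ are correct as written, and $(c)\Rightarrow(d)$ is essentially right, though your claim that every norm-open subset of $\mathfrak E$ is $F_\sigma$ in $(\mathfrak E,\mathfrak T)$ needs one more step: lower semicontinuity of $x\mapsto\|x-y\|$ makes each closed ball $\mathfrak T$-closed, but you must use separability of $(\mathfrak E,\|\cdot\|)$ to express the open set as a \emph{countable} union of such balls before concluding that $X$ is a $G_\delta$ in $(\mathfrak E,\mathfrak T)$ and hence absolute $F_{\sigma\delta}$.

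The genuine gap is $(d)\Rightarrow(b)$, which you correctly identify as the heart of the matter and then do not prove. You appeal to an unnamed ``internal topological characterization of Erd\H{o}s space,'' verify only the routine clauses (almost zero-dimensionality, the $F_{\sigma\delta}$ witness, cohesion via the slice argument), and explicitly defer the remaining ones, conceding that they are ``delicate'' and ``must similarly be shown to pass to the product.'' Those remaining clauses are not a peripheral verification; they are where all the content of the theorem lives. In Dijkstra--van Mill the characterization of $\mathfrak E$ is stated in terms of a witness topology equipped with a Sierpi\'nski stratification whose members satisfy strong smallness/nowhere-density conditions (the same circle of ideas that Theorem 1 of the present paper repackages), and checking these for $X\times\mathfrak E$ requires actually constructing a suitable stratification of the product witness topology from one for $(X,\mathfrak T_X)$ (which exists by Sierpi\'nski's theorem) and one for $\mathfrak E$, then verifying the smallness conditions stratum by stratum. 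Until that is done, the cycle is not closed and the equivalence is not established; as written, the proposal reduces the theorem to a black box plus an admitted unfinished verification.
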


\begin{up}[{\cite[Corollary 9.3]{erd}}]Every  almost zero-dimensional complete ($G_{\delta}$-)space is an Erd\H{o}s space factor.\end{up}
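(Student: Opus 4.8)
The plan is to verify condition (d) of the preceding Proposition: given that $X$ is almost zero-dimensional and completely metrizable, I will produce an $F_{\sigma\delta}$ topology witnessing its almost zero-dimensionality, and then invoke the equivalence of (a) and (d).

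First I fix a complete, bounded metric $d$ generating the given topology $\mathcal{T}$ on $X$. Using almost zero-dimensionality together with second countability, I extract a countable family $\mathcal{A}$ of C-sets of $X$ that is a neighborhood network of arbitrarily small $d$-diameter: for every $x$ and every $\mathcal{T}$-neighborhood $V$ of $x$ there is an $A\in\mathcal{A}$ with $x\in\operatorname{int}_{\mathcal T}A\subseteq A\subseteq V$ (one selects, for each pair of basic open sets $U\subseteq V$ admitting a C-set in between, a single such C-set). Collecting the clopen sets whose intersections define the members of $\mathcal{A}$ yields a countable family $\{C_n\}_{n<\omega}$ of $\mathcal{T}$-clopen sets that separates points. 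The map $e\colon X\to 2^\omega$, $e(x)=(\mathbf 1_{C_n}(x))_{n<\omega}$, is then a $\mathcal T$-continuous injection, and the topology $\mathfrak T$ it pulls back --- the topology generated by $\{C_n\}$ --- is coarser than $\mathcal T$, zero-dimensional, and has the members of $\mathcal A$ as a neighborhood basis of $\mathfrak T$-closed sets; hence $\mathfrak T$ witnesses the almost zero-dimensionality of $X$ and $(X,\mathfrak T)\cong e(X)=:Y\subseteq 2^\omega$.

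It remains to see that $Y$ is an absolutely $F_{\sigma\delta}$ space, for which it suffices to show that $Y$ is $F_{\sigma\delta}$ in the compact space $2^\omega$. Each $A\in\mathcal A$ is an intersection of members of $\{C_n\}$, so $e(A)=Y\cap P_A$ for a closed $P_A\subseteq 2^\omega$ with $e^{-1}(P_A)=A$. Put $S_m=\bigcup\{\,\overline{e(A)}:A\in\mathcal A,\ \diam_d A\le 1/m\,\}$; being a countable union of closed sets, each $S_m$ is $F_\sigma$, so $S:=\bigcap_{m<\omega}S_m$ is $F_{\sigma\delta}$. The inclusion $Y\subseteq S$ is immediate from the network property: any $y=e(x)$ lies in $\overline{e(A)}$ for some small $A\in\mathcal A$ containing $x$, at every scale $1/m$.

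The heart of the argument, and the step I expect to be the main obstacle, is the reverse inclusion $S\subseteq Y$: I must recover, from membership of $y\in 2^\omega$ in every $S_m$, an actual point $x\in X$ with $e(x)=y$. Coordinatewise convergence toward $y$ does not by itself force a preimage, because the defining C-sets are infinite intersections of clopen sets (this is exactly why positive-dimensional almost zero-dimensional spaces have no small clopen sets, and why the natural complexity here is $F_{\sigma\delta}$ rather than $G_\delta$). To overcome this I will organize $\mathcal A$ not as a loose family but as a nested Sierpi\'{n}ski-type stratification of C-sets --- a Souslin scheme $\{A_s:s\in\omega^{<\omega}\}$ of nonempty C-sets with $A_{s^\frown i}\subseteq A_s$ and $\diam_d A_s\to 0$ along branches --- chosen so that the approximations selected at successive scales are forced to be nested. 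Completeness of $d$ then guarantees that along the relevant branch the C-sets shrink to a single point $x$, and continuity of $e$ together with the matching of coordinates gives $e(x)=y$; this yields $S\subseteq Y$ and hence $Y=S$ is $F_{\sigma\delta}$. Since $(X,\mathfrak T)$ is therefore absolutely $F_{\sigma\delta}$, condition (d) holds and $X$ is an Erd\H{o}s space factor.
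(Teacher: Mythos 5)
Your overall strategy---verifying condition (d) of Proposition 1 by generating a coarser zero-dimensional topology from the clopen sets that define a countable family of small C-sets---is sound, and it is in the spirit of how this paper actually works (the paper itself gives no proof of this Proposition; it only cites \cite[Corollary 9.3]{erd}, where the result is obtained from condition (c) via a closed embedding into $\mathfrak E_{\mathrm{c}}$). The construction of the witness topology $\mathfrak T$ and the inclusion $Y\subseteq S$ are fine.

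However, the step you yourself flag as the heart of the matter, $S\subseteq Y$, is not proved, and the proposed repair does not close the gap. With $S_m$ a union over \emph{all} small $A\in\mathcal A$, a point $y\in\bigcap_m S_m$ only yields, for each $m$, some $A_m$ with $y\in\overline{e(A_m)}$; these $A_m$ need not meet one another, so no Cauchy-type sequence in $(X,d)$ is produced. Reorganizing $\mathcal A$ into a nested Souslin scheme $\{A_s: s\in\omega^{<\omega}\}$ does not by itself fix this: (i) the set $\{s: y\in\overline{e(A_s)}\}$ is a subtree that is nonempty at every level but may be infinitely branching, hence need not contain an infinite branch, while redefining $S$ by the Souslin operation along branches destroys its visible $F_{\sigma\delta}$ form; and (ii) even along a branch whose C-sets shrink to a point $x$, concluding $e(x)=y$ requires the images $e(A_s)$ to become small in $2^\omega$, which does not follow from $\diam_d(A_s)\to 0$ since $e$ carries no such modulus (fixable by forcing each $A_s$ with $\dom(s)=m$ into a single atom of $C_0,\dots,C_{m-1}$, but you do not do this, and it does not resolve (i)). These difficulties are precisely the content of the nontrivial direction of Sierpi\'{n}ski's theorem. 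The efficient correct completion, using this paper's own tools, is: the completeness argument you describe yields a Sierpi\'{n}ski stratification of $X$ by C-sets (exactly the construction of $(B_\beta)_{\beta\in T}$ for $\mathfrak E_{\mathrm{c}}$ in the first paragraph of the proof of Theorem 1, which uses only completeness and almost zero-dimensionality); the second paragraph of that proof then shows the stratification survives in the witness topology, and the cited theorem of Sierpi\'{n}ski---rather than a hand-rolled $F_{\sigma\delta}$ formula---certifies that $(X,\mathfrak T)$ is absolute $F_{\sigma\delta}$, so condition (d) applies.
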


They then asked the following question, motivated by van Engelen's result  that $X\times \mathbb Q ^\omega\simeq \mathbb Q ^\omega$ for every  zero-dimensional $F_{\sigma\delta}$-space $X$ \cite{vee}. Note that  zero-dimensional $F_{\sigma\delta}$-spaces are also  Erd\H{o}s space factors by condition (d) in Proposition 1.

\begin{uq}[{\cite[Question 9.7]{erd}}]Is every almost zero-dimensional $F_{\sigma\delta}$-space an Erd\H{o}s space factor?\end{uq}

In this paper we give an intrinsic characterization of Erd\H{o}s space factors  (Theorem 1) and  answer Question 1 in the negative.  

Our  characterization will imply that if $X$ is an almost zero-dimensional space which can be written as a countable union of C-set Erd\H{o}s space factors, then $X$ is an Erd\H{o}s space factor (Theorem 2). For example, if $X$ is an Erd\H{o}s space factor then so is the Vietoris hyperspace $\mathcal F(X)$  (Corollary 4). Combining Theorem 2 and Proposition 2 will show that  every almost zero-dimensional  countable union of complete C-sets is an Erd\H{o}s space factor (Corollary 5). This result applies to Dijkstra's homogeneous  space $T(\tilde E,E')$ featured in  \cite{dij} (Corollary 6).  

We will present three counterexamples to Question 1. One  is nowhere $\sigma$-complete and $F_{\sigma\delta}$, similar to $\mathfrak E$. Another is strongly $\sigma$-complete and thus automatically $F_{\sigma\delta}$ \cite[Lemma 2.1]{vevm}. All three examples are  first category, yet they  have the property that   countable unions  of nowhere dense C-sets have empty interiors. An Erd\H{o}s space factor cannot have that combination of properties (Theorem 8). 

An element $X$ of a class of topological spaces is called the \textbf{stable space} for that class if for every space $Y$ in the class we have that $X\times Y$ is homeomorphic to $X$.  The negative answer to Question 1 shows that $\mathfrak E$ is unstable amongst the almost zero-dimensional $F_{\sigma\delta}$-spaces.  
Likewise, the \textbf{complete Erd\H{o}s space} $$\mathfrak E_{\mathrm{c}}:=\{x\in \ell^2:x_n\in \mathbb P\text{ for all }n<\omega\}$$ is unstable in the class of almost zero-dimensional Polish spaces because $\mathfrak E_{\mathrm{c}}$ is not homeomorphic to its $\omega$-power \cite{stab}. By contrast,  $\mathfrak E_{\mathrm{c}}^\omega$ is  stable and is therefore the almost zero-dimensional analogue of $\mathbb P$  (the space of irrational numbers) \cite{erd2}.  It is unknown whether a stable element exists for the class of almost zero-dimensional $F_{\sigma\delta}$-spaces.

\section{The characterization}

A \textbf{tree} $T$ on an alphabet $A$ is a subset of $A^{<\omega}$ that is closed under initial segments, i.e.\ if $\beta \in T$ and $\alpha\prec \beta$ then $\alpha\in T$.  An element $\lambda\in A^\omega$ is an \textbf{infinite branch} of $T$ provided $\lambda\restriction k \in T$ for every $k <\omega$. We let  $[T]$ denote  the set of all infinite branches of $T$. If $\alpha,\beta\in T$ are such that $\alpha\prec \beta$ and $\dom(\beta)=\dom(\alpha)+1$, then we say that $\beta$ is an \textbf{immediate successor} of $\alpha$  and $\suc(\alpha)$ denotes the set of immediate successors of $\alpha$ in $T$. 

A system $(X_\alpha)_{\alpha\in T}$ is a \textbf{Sierpi\'{n}ski stratification} of a space $X$ if:
\begin{enumerate}
 \item $T$ is a non-empty tree over a countable alphabet,

 \item each $X_\alpha$ is a closed subset of $X$,   
 
\item  $X_\varnothing = X$  and $X_\alpha=\bigcup \{X_\beta:\beta\in \suc(\alpha)\}$ for each $\alpha\in T$,  and  
 
 \item  if $\lambda \in [T]$ then the sequence $X_{\lambda\restriction 0}, X_{\lambda\restriction 1}, \ldots$  converges to a point  in $X$.
\end{enumerate}
A space  is absolute $F_{\sigma\delta}$ if and only if it has a Sierpi\'{n}ski stratification {\cite[Th\'{e}or\`{e}me]{si}}.

\begin{ut}An almost zero-dimensional space $X$ is an Erd\H{o}s space factor if and only if $X$ has a Sierpi\'{n}ski stratification of C-sets.\end{ut}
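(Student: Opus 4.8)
The plan is to reduce both implications to condition (d) of Proposition 1 --- the existence of an $F_{\sigma\delta}$ topology witnessing almost zero-dimensionality --- and to mediate between such a coarse witness topology and the given topology of $X$ through Sierpi\'{n}ski's theorem. Throughout, write $d$ for a metric on $X$ and record two elementary facts about any zero-dimensional topology $\mathfrak T$ that is coarser than $d$: first, since $\mathfrak T$-clopen sets are $d$-clopen and $(X,\mathfrak T)$ is zero-dimensional, every $\mathfrak T$-closed set is an intersection of $d$-clopen sets and hence a C-set of $X$; second, because $\mathfrak T$ is coarser, any sequence of sets converging to a point in $(X,d)$ also converges to that point in $(X,\mathfrak T)$. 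A third fact will be used for shrinking: if $X$ is almost zero-dimensional then every C-set is a countable intersection of clopen sets (its open complement is a union of clopen sets, so Lindel\"{o}fness applies), and, using a C-set neighborhood basis together with Lindel\"{o}fness, every C-set $D$ can be covered by countably many C-sets contained in $D$ of arbitrarily small $d$-diameter. I will use repeatedly that a countable intersection of C-sets is again a C-set.

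For the backward implication, suppose $X$ is almost zero-dimensional with a Sierpi\'{n}ski stratification $(X_\alpha)_{\alpha\in T}$ of C-sets. The tree $T$ is countable, so I may fix, for each $\alpha$, countably many clopen subsets of $X$ whose intersection is $X_\alpha$. Fix also a witness topology $\mathfrak T_0$ for the almost zero-dimensionality of $X$, and let $\mathfrak T^\ast$ be the topology generated by $\mathfrak T_0$ together with all of these clopen sets and their complements. Then $\mathfrak T^\ast$ is a second-countable, zero-dimensional topology coarser than $d$; it witnesses almost zero-dimensionality because it refines $\mathfrak T_0$, and each $X_\alpha$ is $\mathfrak T^\ast$-closed by construction. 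Since the given stratification converges in $d$, it converges in the coarser $\mathfrak T^\ast$, so $(X_\alpha)_{\alpha\in T}$ is a Sierpi\'{n}ski stratification of $(X,\mathfrak T^\ast)$. By Sierpi\'{n}ski's theorem $(X,\mathfrak T^\ast)$ is absolute $F_{\sigma\delta}$; thus $\mathfrak T^\ast$ is an $F_{\sigma\delta}$ topology witnessing the almost zero-dimensionality of $X$, and $X$ is an Erd\H{o}s space factor by Proposition 1.

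For the forward implication, suppose $X$ is an almost zero-dimensional Erd\H{o}s space factor. By Proposition 1 there is an $F_{\sigma\delta}$ witness topology $\mathfrak T$, so $(X,\mathfrak T)$ is zero-dimensional and absolute $F_{\sigma\delta}$ and therefore carries, by Sierpi\'{n}ski's theorem, a Sierpi\'{n}ski stratification $(Y_\beta)_{\beta\in S}$. By the first elementary fact each $Y_\beta$ is a C-set of $X$. The difficulty is that the guaranteed convergence of $(Y_\beta)$ is only in the coarse topology $\mathfrak T$, whereas a Sierpi\'{n}ski stratification of $X$ must converge in the finer topology $d$. I therefore intend to refine $(Y_\beta)$ into a new C-set stratification $(X_\sigma)_{\sigma\in T'}$ by a simultaneous recursion that, at each step, first splits $X_\sigma$ along the $\mathfrak T$-successors of the node of $S$ it currently tracks (intersecting with the sets $Y_\beta$, which keeps everything a C-set) and then covers each resulting piece by countably many C-subsets of $d$-diameter at most $2^{-n}$, using the shrinking fact. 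Each node of $T'$ will be tagged with the node of $S$ it refines, so that every infinite branch of $T'$ projects to an infinite branch of $S$.

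The heart of the argument --- and the step I expect to be the main obstacle --- is to prove that every branch of the refined tree converges to a point of $X$ in the metric $d$. Along a branch $\lambda\in[T']$ the sets $X_{\lambda\restriction k}$ are nested C-sets with $d$-diameter tending to $0$, so their intersection is at most a single point; but since $X$ need not be $d$-complete (indeed $\mathfrak E$ is not), I must rule out an \emph{empty} intersection, i.e.\ a branch that escapes $X$. This is exactly what the interlocked $\mathfrak T$-stratification is meant to prevent: the projected branch $\mu$ of $S$ converges in $\mathfrak T$ to a genuine point $p\in X$ with $\bigcap_k Y_{\mu\restriction k}=\{p\}$, and because $X_{\lambda\restriction k}\subseteq Y_{\mu\restriction k}$ the $d$-limit of the branch can only be $p$. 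The technical work is to arrange the recursion --- choosing the small C-sets as neighborhoods that track a $d$-Cauchy sequence of centers whose $\mathfrak T$-limit is forced to be the $F_{\sigma\delta}$-branch limit $p$ --- so that $p$ in fact lies in every $X_{\lambda\restriction k}$, making the intersection $\{p\}$ rather than empty. Once this is secured, $(X_\sigma)_{\sigma\in T'}$ is a Sierpi\'{n}ski stratification of $X$ consisting of C-sets, which completes the forward implication.
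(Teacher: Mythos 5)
Your backward implication is sound and is essentially the paper's own argument (the paper generates the coarse topology directly from a sub-basis of clopen pieces of the $A_\alpha$'s and of a C-set neighborhood basis, rather than refining a pre-existing witness topology $\mathfrak T_0$, but the two devices are interchangeable). The problem is the forward implication, where you have described a strategy rather than a proof, and the step you yourself flag as ``the main obstacle'' is exactly where the argument is incomplete. Concretely: along a branch $\lambda$ of your refined tree you have non-empty nested C-sets $X_{\lambda\restriction k}\subseteq Y_{\mu\restriction k}$ with $d$-diameters tending to $0$. That the projected branch $(Y_{\mu\restriction k})$ $\mathfrak T$-converges to a point $p$ tells you only that \emph{if} the branch has a $d$-limit in $X$ then it must be $p$; it does not put $p$ inside any $X_{\lambda\restriction k}$, and a nested sequence of non-empty small C-sets in a non-complete space can have empty intersection. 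Your proposed remedy --- tracking a $d$-Cauchy sequence of centers whose $\mathfrak T$-limit is $p$ --- does not close this: a $d$-Cauchy sequence that $\mathfrak T$-converges to $p$ need not $d$-converge at all, since its limit in the $d$-completion may lie outside $X$, unless the witness topology enjoys a compatibility property that you have not established for an arbitrary $F_{\sigma\delta}$ witness.

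There is, however, a clean way to close the gap inside your framework, and it yields a proof genuinely different from the paper's. Do the shrinking with $\mathfrak T$-\emph{closed} sets rather than arbitrary C-sets: the witness property gives every point a neighborhood basis of $\mathfrak T$-closed sets, so by Lindel\"{o}fness for each $n$ there is a countable cover $\{C^n_i:i<\omega\}$ of $X$ by $\mathfrak T$-closed sets of $d$-diameter $<1/n$; setting $B_{\beta*\gamma}=Y_\beta\cap\bigcap_{n<\dom(\beta)}C^n_{\gamma(n)}$ and discarding empty sets makes every piece $\mathfrak T$-closed, hence a C-set of $X$. Now pick $x_j\in B_{\lambda\restriction j}$ for $j\geq k$: these points lie in the $\mathfrak T$-closed set $B_{\lambda\restriction k}$ and $\mathfrak T$-converge to $p$ because $Y_{\mu\restriction j}\to p$, so $p\in B_{\lambda\restriction k}$ for every $k$, and the diameter condition then forces $d$-convergence of the branch to $p$. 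The paper sidesteps the convergence issue entirely by using Proposition 1(c) instead of 1(d): it realizes $X$ as a closed subset of $\mathbb Q^\omega\times\mathfrak E_{\mathrm{c}}$ and writes down an explicit product stratification by C-sets, with the completeness of $\mathfrak E_{\mathrm{c}}$ supplying the convergence. As submitted, though, your forward direction is a gap, not a proof.
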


\begin{proof}Suppose that $X$ is an Erd\H{o}s space factor.  Then $X$ is homeomorphic to a closed subset of $\mathfrak E$. Since $\mathfrak E\simeq \mathbb Q ^\omega\times \mathfrak E_{\mathrm{c}}$ \cite[Proposition 9.1]{erd}, we may assume that $X$ is a closed subset of $\mathbb Q ^\omega\times \mathfrak E_{\mathrm{c}}$.  Let $(A_{\alpha})_{\alpha\in S} $ be  the obvious Sierpi\'{n}ski stratification of $\mathbb Q ^\omega$ in which  $S$ is a tree over $\mathbb Q$. Let $d$ be a complete metric for $\mathfrak E_{\mathrm{c}}$. For each $n<\omega$ let $\{B^n_i:i<\omega\}$ be a C-set covering  of $\mathfrak E_{\mathrm{c}}$ such that $\diam(B^n_i)<1/n$ in the metric $d$.  Let $B_{\varnothing}=\mathfrak E_{\mathrm{c}}$. For each non-empty $\beta\in \omega^{<\omega}$ define $B_{\beta}=\bigcap\{B^n_{\beta(n)}:n<\dom(\beta)\}$. Then $T:=\{\beta\in \omega^{<\omega}:B_{\beta}\neq\varnothing\}$ is a tree over $\omega$, and  by completeness of $(\mathfrak E_{\mathrm{c}},d)$ we have that $(B_{\beta})_{\beta\in T}$ is a Sierpi\'{n}ski stratification of $\mathfrak E_{\mathrm{c}}$.  If $\alpha\in \mathbb Q ^{<\omega}$,  $\beta\in \omega^{<\omega}$, and $n=\dom(\alpha)=\dom(\beta)$, then we define $\alpha*\beta =\langle \langle \alpha(0),\beta(0)\rangle, \ldots, \langle \alpha(n-1),\beta(n-1)\rangle\rangle.$ Note that $$S * T := \{\alpha*\beta : \alpha \in S,\; \beta \in  T, \text{ and } \dom(\alpha) = \dom(\beta)\}$$  is a tree over $\mathbb Q \times \omega$,   $(A_{\alpha}\times B_{\beta})_{\alpha*\beta\in S*T}$ is a Sierpi\'{n}ski stratification of $\mathbb Q ^\omega\times \mathfrak E_{\mathrm{c}}$, and each $A_{\alpha}\times B_{\beta}$ is a C-set in $\mathbb Q ^\omega\times \mathfrak E_{\mathrm{c}}$. Then $((A_\alpha\times B_{\beta}) \cap X)_{{\alpha*\beta\in S*T}}$ is a Sierpi\'{n}ski stratification of $X$ consisting of C-sets in $X$.

Now suppose that $(A_\alpha)_{\alpha\in T}$ is a Sierpi\'{n}ski stratification of $X$ where every $A_\alpha$ is a C-set in $X$. For each $\alpha\in T$ write $A_\alpha=\bigcap \{C^\alpha_n:n<\omega\}$ where each $C^\alpha_n$ is clopen in $X$.  Let $\{B_i:i<\omega\}$ be a neighborhood basis of C-sets for $X$, and for each $i<\omega$ write  $B_i=\bigcap \{D_{ij}:j<\omega\}$ where each $D_{ij}$ is clopen in $X$.  The  topology $\mathfrak T$ that is generated by the sub-basis 
$$\{C^\alpha_n,X\setminus C^\alpha_n:\alpha\in T,\; n< \omega\}\cup \{D_{ij},X\setminus D_{ij}:i,j<\omega\}$$ is easily seen to be a second countable, regular, zero-dimensional topology on $X$. It witnesses the almost zero-dimensionality of $X$ because $\mathfrak T$ is coarser than the original topology of $X$ and every $B_i$ is $\mathfrak T$-closed. Further, every $A_\alpha$ is $\mathfrak T$-closed and $(A_\alpha)_{\alpha\in T}$ is a Sierpi\'{n}ski stratification of $(X,\mathfrak T)$. By Sierpi\'{n}ski's theorem   $(X,\mathfrak T)$ is  an $F_{\sigma\delta}$-space. By Proposition 1,  $X$ is an Erd\H{o}s space factor. \end{proof}

\section{Countable unions of C-set $\mathfrak E$-factors}

The power of Theorem 1 is demonstrated in the proof of the following. 
\begin{ut}If $X$ is an almost zero-dimensional space which is the union of countably many C-set Erd\H{o}s space factors, then $X$ is an Erd\H{o}s space factor. \end{ut}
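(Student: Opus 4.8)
The plan is to use Theorem 1 as the engine: since $X$ is almost zero-dimensional, it suffices to construct a Sierpi\'{n}ski stratification of $X$ by C-sets of $X$. We are given $X=\bigcup_{k<\omega} X_k$ where each $X_k$ is a C-set in $X$ and each $X_k$ is itself an Erd\H{o}s space factor. The first observation I would make is that each $X_k$, being a C-set in $X$, is closed in $X$; moreover, since $X$ is almost zero-dimensional and $X_k$ is a C-set, the subspace $X_k$ is itself almost zero-dimensional, so by Theorem~1 each $X_k$ carries a Sierpi\'{n}ski stratification $(A^k_\alpha)_{\alpha\in T_k}$ whose members are C-sets \emph{in} $X_k$. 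The key compatibility point I would check early is that a C-set in a C-set of $X$ is again a C-set in $X$: if $A$ is an intersection of clopen subsets of $X_k$ and $X_k=\bigcap_i U_i$ with each $U_i$ clopen in $X$, then each clopen subset of $X_k$ extends to a clopen subset of $X$ intersected with $X_k$, and intersecting with the $U_i$ shows $A$ is an intersection of clopen sets of $X$. Hence every $A^k_\alpha$ is a C-set in $X$.

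Next I would assemble the individual stratifications into a single one over a combined tree. The natural device is to take a tree $T$ whose immediate successors of the root $\varnothing$ are indexed so as to ``list'' the pieces $X_0,X_1,\dots$, and below each such successor hang a copy of the tree $T_k$ stratifying $X_k$. Concretely, I would build $T$ so that $[T]=\bigsqcup_k [T_k]$ and define $A_\varnothing=X$, with the root's children being a stratification of $X$ itself into the $X_k$'s arranged in a convergent fashion. The trouble is that condition (3) of a Sierpi\'{n}ski stratification demands $A_\varnothing=\bigcup\{A_\beta:\beta\in\suc(\varnothing)\}$ be an \emph{exact} union, which $\bigcup_k X_k=X$ supplies, while condition (4) demands that every infinite branch's associated sequence of closed sets converge to a point. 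Along branches that eventually descend into a fixed $T_k$, convergence is inherited from the stratification of $X_k$; the delicate part is arranging the finitely-many-steps-to-enter-$X_k$ layer so that the new closed sets inserted near the root (which must shrink toward each $X_k$) do not spoil convergence and still satisfy the exact-union condition (3).

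The main obstacle, and the step I would spend the most care on, is precisely this convergence-plus-exact-union bookkeeping at the top of the tree. To handle it cleanly I would interpolate: instead of attaching $T_k$ directly under a single child of the root, I would thread each $X_k$ through a telescoping sequence of closed sets that shrink in diameter, so that any branch eventually committing to $X_k$ sees a sequence of sets first approximating $X_k$ (with diameters controlled by a fixed metric) and then continuing with the convergent sequence from $(A^k_\alpha)_\alpha$. Because condition (4) is a statement about tail behavior of each branch, and every branch of $T$ eventually lies inside exactly one $T_k$, the diameter control near the root combined with the given convergence inside $X_k$ yields convergence to a point of $X_k\subseteq X$. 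The exact-union condition is maintained at each node by never discarding points: each finite-stage approximating set at the $k$th ``entry level'' is chosen as a C-set containing $X_k$, decomposed exactly into its successors. Once the stratification $(A_\alpha)_{\alpha\in T}$ is in hand with all members C-sets in $X$, Theorem~1 immediately gives that $X$ is an Erd\H{o}s space factor.
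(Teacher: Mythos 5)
Your overall strategy is the paper's: apply Theorem~1 to each piece $X_k$, glue the resulting trees under a common root, check that the members of the glued system are C-sets in $X$, and apply Theorem~1 again. But your justification of the ``key compatibility point'' --- that a C-set in the C-set $X_k$ is a C-set in $X$ --- is a genuine gap. You assert that every clopen subset of $X_k$ has the form $V\cap X_k$ with $V$ clopen in $X$. That extension property does not follow from $X_k$ being closed, nor even from $X_k$ being a C-set. For instance, let $X$ consist of the arcs $I_n=\{1/n\}\times[0,1]$ together with the two points $p=(0,0)$ and $q=(0,1)$. Then $\{p,q\}=\bigcap_N\bigl(\{p,q\}\cup\bigcup_{n\geq N}I_n\bigr)$ is a C-set in $X$ and $\{p\}$ is clopen in it, yet every clopen subset of $X$ containing $p$ must contain all but finitely many of the connected arcs $I_n$, hence contain $q$ in its closure; so $\{p\}$ is not even a C-set in $X$, and in particular no clopen $V$ satisfies $V\cap\{p,q\}=\{p\}$. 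The fact you need is true in the present setting, but it is not a one-line consequence of the definitions; the paper simply invokes \cite[Corollary 4.20]{erd} for it, and you should do the same (or supply a correct proof that uses more than the C-set hypothesis).

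The other place where your write-up diverges from a proof is the tree gluing, where you anticipate a difficulty that is not there and consequently never finish the construction. Take $T=\bigcup_{k<\omega}\{\langle k\rangle^\frown\alpha:\alpha\in T_k\}$, set $X_\varnothing=X$ and $X_{\langle k\rangle^\frown\alpha}=A^k_\alpha$. Condition (3) at the root is exactly $X=\bigcup_k X_k$, and at every other node it is inherited from the stratification of the relevant $X_k$. Condition (4) concerns only the tail of the sequence $X_{\lambda\restriction 0},X_{\lambda\restriction 1},\ldots$: every $\lambda\in[T]$ has the form $\langle k\rangle^\frown\mu$ with $\mu\in[T_k]$, so after the first term its sequence is the convergent sequence supplied by the stratification of $X_k$, and the limit lies in $X_k\subset X$. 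No diameter control, telescoping layers, or interpolation near the root is needed; the ``delicate bookkeeping'' you describe is solving a non-problem, and as written it leaves the stratification unspecified. With the citation above and this one-level gluing made explicit, your argument becomes the paper's proof.
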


\begin{proof}Suppose that $X$ is almost zero-dimensional and $X=\bigcup \{A_n:n<\omega\}$ where each $A_n$ is both an  Erd\H{o}s space factor and a C-set in $X$.  By Theorem 1, for each $n<\omega$ there is a Sierpi\'{n}ski stratification  $(B^n_\alpha)_{\alpha\in T_n}$  of $A_n$ such that each $B^n_\alpha$ is a C-set in $A_n$.   Define a tree $$T=\bigcup_{n<\omega}\{\langle n\rangle^\frown \alpha:\alpha\in T_n\}.$$ Put $X_{\varnothing}=X$ and  $X_{\langle n\rangle^\frown \alpha}=B^n_\alpha$ for each $n<\omega$ and $\alpha\in T_n$. By \cite[Corollary 4.20]{erd} each $B^n_{\alpha}$ is a C-set in $X$. Thus $(X_\alpha)_{\alpha\in T}$ is a Sierpi\'{n}ski stratification of $X$ consisting of C-sets in $X$.  By Theorem 1,  $X$ is an Erd\H{o}s space factor. \end{proof}

By Proposition 1 and Theorem 2 we have:

\begin{uc} Let $X$ be an almost zero-dimensional space. If $X$  can be written as the union of countably many  C-sets which admit closed embeddings into $\mathfrak E$, then $X$ has a closed embedding in $\mathfrak E$.\end{uc}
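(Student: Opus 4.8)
The plan is to derive Corollary 3 as an almost immediate consequence of Theorem 2 by matching up the equivalent conditions in Proposition 1. First I would observe that the hypothesis is precisely the hypothesis of Theorem 2, phrased in the language of condition (c) from Proposition 1: each of the countably many C-sets admits a closed embedding into $\mathfrak E$, which by the equivalence $(\text{a})\Leftrightarrow(\text{c})$ means each is an Erd\H{o}s space factor. So the collection $\{A_n:n<\omega\}$ witnessing the countable union consists of C-sets that are Erd\H{o}s space factors, exactly as Theorem 2 requires.

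Next I would apply Theorem 2 directly: since $X$ is almost zero-dimensional and is the union of countably many C-set Erd\H{o}s space factors, Theorem 2 concludes that $X$ is itself an Erd\H{o}s space factor. Finally I would translate this conclusion back through Proposition 1, using the equivalence $(\text{a})\Leftrightarrow(\text{c})$ in the reverse direction: being an Erd\H{o}s space factor is equivalent to admitting a closed embedding into $\mathfrak E$. Hence $X$ has a closed embedding in $\mathfrak E$, which is the desired conclusion.

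I do not anticipate any genuine obstacle here, since the corollary is purely a restatement of Theorem 2 under the dictionary provided by Proposition 1; the only thing to be careful about is invoking the correct direction of the equivalence at each end (turning "closed embedding into $\mathfrak E$" into "Erd\H{o}s space factor" for the hypotheses on the pieces $A_n$, and back again for the conclusion about $X$). The almost zero-dimensionality of $X$ is assumed outright and is needed to apply Theorem 2, so no separate verification is required.
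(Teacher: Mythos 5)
Your proposal is correct and matches the paper exactly: the paper states Corollary 3 with the one-line justification ``By Proposition 1 and Theorem 2,'' which is precisely the translation through the equivalence (a)$\Leftrightarrow$(c) of Proposition 1 followed by an application of Theorem 2 that you spell out. No issues.
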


We will now present an application of Theorem 2. For any space $X$ and $n<\omega$ we let  $\mathcal F_n(X)$ denote the set of all non-empty subsets of $X$ of cardinality $\leq n$.  The set of all non-empty finite subsets of $X$ is denoted $$\mathcal F(X)=\bigcup_{n<\omega} \mathcal F_n(X).$$ The Vietoris topology on $\mathcal F(X)$ has a basis  of open sets of the form $$\textstyle \langle U_0,\ldots,U_{k-1}\rangle=\{F\in \mathcal F(X): F\subset \bigcup_{i<k} U_i\text{ and }F\cap U_i\neq \varnothing\text{ for each }i<k\},$$ 
where $k<\omega$ and $U_0,\ldots,U_{k-1}$ are non-empty open subsets of $X$.
\begin{uc}If $X$ is an Erd\H{o}s space factor then so is $\mathcal F(X)$. \end{uc}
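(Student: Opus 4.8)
The plan is to apply Theorem~2. Writing $\mathcal F(X)=\bigcup_{n<\omega}\mathcal F_n(X)$, it suffices to verify two things for each $n<\omega$: that $\mathcal F_n(X)$ is an Erd\H{o}s space factor, and that $\mathcal F_n(X)$ is a C-set in $\mathcal F(X)$. Together with the (easy) observation that $\mathcal F(X)$ is almost zero-dimensional whenever $X$ is, Theorem~2 then yields the conclusion. So the argument breaks into three pieces: almost zero-dimensionality of the hyperspace, the factor property for each $\mathcal F_n(X)$, and the C-set property.

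\medskip

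First I would address the factor property of $\mathcal F_n(X)$. The natural idea is to view $\mathcal F_n(X)$ as a continuous image of the symmetric product $X^n/\!\sim$, but for the Erd\H{o}s-factor machinery it is cleaner to exhibit a Sierpi\'{n}ski stratification of C-sets directly. By Theorem~1, $X$ carries a Sierpi\'{n}ski stratification $(A_\alpha)_{\alpha\in T}$ of C-sets. I would build a stratification of $\mathcal F_n(X)$ from $n$-fold products of the $A_\alpha$: for a tuple $(\alpha_0,\ldots,\alpha_{n-1})$ of nodes of $T$ of common length, set the corresponding stratum to be $\{F\in\mathcal F_n(X): F\subseteq\bigcup_{i<n}A_{\alpha_i}\text{ and }F\cap A_{\alpha_i}\neq\varnothing\}$ (or a similar Vietoris-type set). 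One checks that each such set is a C-set in $\mathcal F_n(X)$ because the defining conditions are governed by clopen sets in a witnessing zero-dimensional topology on the hyperspace, and that the convergence condition~(4) for branches follows from condition~(4) for the $n$ coordinate branches in $X$: if each $A_{\alpha_i\restriction k}$ shrinks to a point $x_i$, then the corresponding strata shrink to the single finite set $\{x_0,\ldots,x_{n-1}\}$. Thus $\mathcal F_n(X)$ is an Erd\H{o}s space factor by Theorem~1.

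\medskip

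Next I would show $\mathcal F_n(X)$ is a C-set in $\mathcal F(X)$. Here the point is that $\mathcal F_n(X)=\{F:|F|\le n\}$ is closed in the Vietoris topology, and in fact more: using a witnessing zero-dimensional topology on $X$, one can separate finite sets of size $>n$ from $\mathcal F_n(X)$ by clopen Vietoris sets of the form $\langle U_0,\ldots,U_k\rangle$ with pairwise disjoint clopen $U_i$, so that $\mathcal F_n(X)$ is an intersection of clopen subsets of $\mathcal F(X)$. I expect this to be the main obstacle, because producing \emph{clopen} (not merely closed) separating sets requires that $X$ admit a zero-dimensional witnessing topology in which points have disjoint clopen neighborhoods, and one must confirm the Vietoris hyperspace of that witnessing topology is itself zero-dimensional and coarser than the Vietoris topology on $X$, so that the relevant basic sets are genuinely clopen in $\mathcal F(X)$. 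Once the C-set property is established, I would invoke Theorem~2 with the countable cover $(\mathcal F_n(X))_{n<\omega}$ to conclude that $\mathcal F(X)$ is an Erd\H{o}s space factor.
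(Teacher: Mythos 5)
Your overall skeleton coincides with the paper's: decompose $\mathcal F(X)=\bigcup_{n<\omega}\mathcal F_n(X)$, check that $\mathcal F(X)$ is almost zero-dimensional, that each $\mathcal F_n(X)$ is an Erd\H{o}s space factor and a C-set in $\mathcal F(X)$, and apply Theorem 2. Your C-set argument is also essentially the paper's, and the obstacle you anticipate there is not a real one: since $X$ is almost zero-dimensional, distinct points are separated by sets clopen in $X$ itself, so for $F$ with $|F|=m>n$ one can partition $X$ into clopen sets $C_0,\ldots,C_{m-1}$ each meeting $F$ in exactly one point; then $\langle C_0,\ldots,C_{m-1}\rangle$ is clopen in the genuine Vietoris topology on $\mathcal F(X)$, contains $F$, and misses $\mathcal F_n(X)$. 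No detour through a witnessing topology on the hyperspace is needed.

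The genuine gap is in your sketch that each $\mathcal F_n(X)$ is an Erd\H{o}s space factor. The paper does not prove this at all; it quotes \cite[Corollary 5.2]{zar}. Your proposed stratification of $\mathcal F_n(X)$ by the sets $V(\alpha_0,\ldots,\alpha_{n-1})=\{F\in\mathcal F_n(X):F\subseteq\bigcup_{i<n}A_{\alpha_i}\text{ and }F\cap A_{\alpha_i}\neq\varnothing\}$, indexed by $n$-tuples of nodes of common length, fails condition (3) of a Sierpi\'{n}ski stratification: a member of $V(\vec\alpha)$ need not belong to $V(\vec\beta)$ for any tuple $\vec\beta$ of immediate successors. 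Take $n=3$ and $F=\{a,b,c\}\subseteq A_{\alpha_0}$ with $F\cap A_{\alpha_1}=F\cap A_{\alpha_2}=\{a\}$, where $A_{\alpha_0}=A_{\beta}\cup A_{\beta'}$ for $\beta,\beta'\in\suc(\alpha_0)$ with $b\in A_{\beta}\setminus A_{\beta'}$ and $c\in A_{\beta'}\setminus A_{\beta}$. Since $b,c\notin A_{\alpha_1}\cup A_{\alpha_2}$, any covering successor tuple must place both $b$ and $c$ into the single slot reserved for a successor of $\alpha_0$, which is impossible. Repairing this requires a different indexing (or a reduction to the symmetric power of $X$), which is exactly the content of the cited result of Zaragoza; as written, your argument for this step does not go through.
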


\begin{proof}Suppose that $X$ is an Erd\H{o}s space factor. Then $\mathcal F(X)$ is almost zero-dimensional by \cite[Proposition 2.2]{zar}.  By \cite[Corollary 5.2]{zar} each  $\mathcal F_n(X)$ is an Erd\H{o}s space factor. And each $\mathcal F_n(X)$ is a C-set in $\mathcal F(X)$.  Indeed, if $F\in \mathcal F(X)\setminus \mathcal F_n(X)$ then we can partition $X$ into $m=|F|$  pairwise disjoint non-empty clopen sets $C_0,\ldots, C_{m-1}$ such that $F\in \langle C_0,\ldots, C_{m-1}\rangle$.  Since $\langle C_0,\ldots, C_{m-1}\rangle$ is clopen in $\mathcal F(X)$ and  $\langle C_0,\ldots, C_{m-1}\rangle\subset \mathcal F(X)\setminus \mathcal F_n(X)$,  this shows that $\mathcal F_n(X)$ is a C-set in $\mathcal F(X)$.  We conclude that $\mathcal F(X)$ is an almost zero-dimensional countable union of C-set Erd\H{o}s space factors. By Theorem 2 $\mathcal F(X)$ is an Erd\H{o}s space factor.  \end{proof}

By Theorem 2 and Proposition 2 we have:

\begin{uc}If $X$ is  almost zero-dimensional space that is a countable union of complete C-sets, then $X$ is an Erd\H{o}s space factor.  \end{uc}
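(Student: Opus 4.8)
The plan is to reduce the statement to a direct combination of Proposition 2 and Theorem 2, so that the only real work is verifying that the hypotheses of those two results are met by the given decomposition. I would begin by writing $X=\bigcup\{A_n:n<\omega\}$, where each $A_n$ is a complete C-set in $X$, and then aim to upgrade each $A_n$ to a \emph{C-set Erd\H{o}s space factor} so that Theorem 2 applies directly.

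The first substantive point is to observe that each $A_n$ is almost zero-dimensional. This is the only place that calls for an argument, but it is routine, since almost zero-dimensionality passes to arbitrary subspaces. Given $y\in A_n$ and a neighborhood $U$ of $y$ in $A_n$, I would extend $U$ to an open set $U'$ of $X$, choose a C-set $C$ of $X$ with $y\in C\subseteq U'$, and note that $C\cap A_n$ is a neighborhood of $y$ in $A_n$ contained in $U$. Because intersecting the clopen sets of $X$ that witness $C$ with $A_n$ produces clopen sets of $A_n$, the set $C\cap A_n$ is a C-set of $A_n$, and this yields the required neighborhood basis of C-sets at $y$.

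With each $A_n$ now known to be almost zero-dimensional and complete by hypothesis, Proposition 2 shows that each $A_n$ is an Erd\H{o}s space factor. Combined with the standing assumption that each $A_n$ is a C-set in $X$, this places us exactly in the hypothesis of Theorem 2: $X$ is an almost zero-dimensional space that is the union of countably many C-set Erd\H{o}s space factors. Invoking Theorem 2 then delivers the conclusion that $X$ is an Erd\H{o}s space factor.

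I do not expect a genuine obstacle, since the structural content has already been absorbed into the two cited results: Theorem 2 carries out the delicate assembly of a global Sierpi\'{n}ski stratification of C-sets from the pieces (via Theorem 1 and the fact that a C-set of a C-set is a C-set), while Proposition 2 supplies the factorization in the complete case. The single step requiring any care is the heredity of almost zero-dimensionality to the subspaces $A_n$, and even that is standard; everything else is bookkeeping confirming that ``complete C-set'' entails ``C-set Erd\H{o}s space factor.''
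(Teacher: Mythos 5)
Your proposal is correct and follows exactly the paper's route: the paper derives this corollary immediately by combining Proposition 2 (complete almost zero-dimensional spaces are Erd\H{o}s space factors) with Theorem 2, just as you do. Your added verification that almost zero-dimensionality is hereditary, so that Proposition 2 applies to each $A_n$, is a correct and reasonable piece of bookkeeping that the paper leaves implicit.
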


We will now apply Corollary 5 to the main example in \cite{dij}.  If $X$ is an almost zero-dimensional space and $A$ is a C-set in $X$, then define $$T(X,A)=\bigcup_{n<\omega} (X\setminus A)^n\times A.$$ Let $\pi : T(X, A) \to X$ be defined by $\pi(x_0,\ldots ,x_{n}) = x_0$.
Consider the collection $\mathcal B$ of subsets of $T(X, A)$ that consists of all sets of the form
$O_0 \times\ldots \times O_{n-1} \times \pi^{-1}(O_{n})$, where $n <\omega$, each $O_i$ is an open subset of $X$, and $O_i\subset X\setminus A$ for each $i<n$. By \cite[Claim 7]{dij}, $\mathcal B$ forms a basis for an almost zero-dimensional topology on  $T(X,A)$.

\begin{uc}Dijkstra's homogeneous almost zero-dimensional space  is a countable union of complete C-sets  and is therefore an Erd\H{o}s space factor.
\end{uc}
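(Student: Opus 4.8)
The plan is to apply Corollary 5, so the entire task reduces to exhibiting Dijkstra's space $T(\tilde E, E')$ as a countable union of complete C-sets. First I would recall the decomposition built into the very definition: $T(X,A) = \bigcup_{n<\omega} (X\setminus A)^n \times A$, which already expresses the space as a countable union of pieces $P_n := (X\setminus A)^n \times A$. So the natural strategy is to show that, in Dijkstra's specific instance where $X = \tilde E$ and $A = E'$ is chosen to be a complete C-set (here $E'$ is a Polish, hence $G_\delta$, C-set and $\tilde E$ is the relevant almost zero-dimensional ambient space from \cite{dij}), each $P_n$ is both complete and a C-set in $T(\tilde E, E')$.

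For completeness of each $P_n$, I would argue that $X\setminus A$ is a complete (Polish) space: since $A$ is a C-set, it is closed, so $X\setminus A$ is open; but more to the point, in Dijkstra's construction the factors are chosen so that $\tilde E \setminus E'$ is Polish, whence the finite product $(\tilde E\setminus E')^n$ is Polish, and the product with the complete space $E'$ is again Polish. The only subtlety is that the topology on $P_n$ is the one inherited from the basis $\mathcal B$ of $T(\tilde E, E')$, not the raw product topology; I would verify that on $P_n$ the subspace topology from $\mathcal B$ coincides with the product topology of $(\tilde E\setminus E')^n\times E'$, using that the basic sets $O_0\times\cdots\times O_{n-1}\times \pi^{-1}(O_n)$ restrict on $P_n$ to ordinary open boxes. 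This identification is the step where I expect the main obstacle to lie, because the last coordinate is governed by $\pi^{-1}(O_n)$ rather than by an open subset of $A$ directly, so I would need to check that $\pi^{-1}(O_n)\cap P_n = O_0'\times\cdots\times(O_n\cap A)$-type sets generate exactly the product topology and that this makes $P_n$ Polish.

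Next I would show each $P_n$ is a C-set in $T(\tilde E, E')$. The idea is that the number of coordinates is detectable by clopen sets: a point of $T(\tilde E, E')$ lies in some $P_m$, and I would separate the ``length at most $n$'' sets from the ``length greater than $n$'' sets using clopen sets built from the basis $\mathcal B$. Concretely, since $A = E'$ is a C-set in $\tilde E$, one can write clopen sets in $\tilde E$ separating points of $\tilde E\setminus A$ from $A$, and pull these back through the coordinate maps to produce clopen subsets of $T(\tilde E,E')$ whose intersection is exactly $\bigcup_{m\le n} P_m$; since each such finite union is itself a C-set and $P_n = \big(\bigcup_{m\le n}P_m\big)\setminus\big(\bigcup_{m<n}P_m\big)$, I would instead directly exhibit $P_n$ as an intersection of clopen sets by controlling, coordinate by coordinate, whether each entry falls in $A$ or its complement.

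Once each $P_n$ is shown to be a complete C-set, the space $T(\tilde E,E') = \bigcup_{n<\omega} P_n$ is an almost zero-dimensional (by \cite[Claim 7]{dij}) countable union of complete C-sets, so Corollary 5 applies verbatim to conclude that $T(\tilde E,E')$ is an Erd\H{o}s space factor. I would close by noting that the almost zero-dimensionality needed as a hypothesis of Corollary 5 is exactly the content of \cite[Claim 7]{dij}, so no additional work is required there. The genuinely delicate point, to reiterate, is matching the subspace topology induced by $\mathcal B$ on each slice $P_n$ with a recognizably Polish product topology; everything else is bookkeeping with clopen sets.
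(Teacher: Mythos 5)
There is a genuine gap in the C-set step. Your decomposition uses the individual levels $P_n=(X\setminus A)^n\times A$, and for $n\geq 1$ these are not C-sets in $T(\tilde E,E')$ --- they are not even closed. The culprit is exactly the feature of the basis $\mathcal B$ you flagged as delicate: a basic neighborhood of a short tuple has the form $\pi^{-1}(O_0)$ (or $O_0\times\cdots\times O_{k-1}\times\pi^{-1}(O_k)$), and its last factor admits tuples of arbitrary length. Concretely, take $x\in A=P_0$ with $x\in\overline{X\setminus A}$ (in Dijkstra's example $E'$ is nowhere dense in $\tilde E$, so every point of $E'$ is such a point). Any basic neighborhood of the one-tuple $(x)$ is $\pi^{-1}(O_0)$ with $x\in O_0$; picking $y_0\in O_0\cap(X\setminus A)$ and any $y_1\in A$ gives $(y_0,y_1)\in P_1\cap\pi^{-1}(O_0)$. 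Hence $(x)\in\overline{P_1}\setminus P_1$, so $P_1$ is not closed and your plan to ``directly exhibit $P_n$ as an intersection of clopen sets by controlling, coordinate by coordinate, whether each entry falls in $A$ or its complement'' cannot succeed: no clopen set can contain $P_1$ while missing $P_0$. (Your completeness argument for $P_n$ is fine --- the trace of $\mathcal B$ on $P_n$ is the product topology and $(\tilde E\setminus E')^n\times E'$ is Polish --- but completeness alone does not rescue the decomposition, since Corollary 5 needs the pieces to be C-sets.)

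Ironically, you passed right by the correct route. The paper works with the cumulative sets $T_n(X,A)=\bigcup_{k\leq n}(X\setminus A)^k\times A$, which you mention only to discard. Writing $A=\bigcap_i C_i$ with each $C_i$ clopen in $X$, one gets $T_n(X,A)=\bigcap_{i<\omega}\bigcup_{k\leq n}(X\setminus A)^k\times\pi^{-1}(C_i)$, and each set $\bigcup_{k\leq n}(X\setminus A)^k\times\pi^{-1}(C_i)$ is clopen: it is a union of basic open sets, and its complement is again a basic open set of the form $(X\setminus A)^n\times\pi^{-1}(X\setminus C_i)$. This makes each $T_n(X,A)$ a C-set, and its completeness is supplied by Claim~6 of Dijkstra's paper, so Corollary 5 applies to $T(\tilde E,E')=\bigcup_n T_n(\tilde E,E')$. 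If you replace your levels $P_n$ by these cumulative unions and drop the attempt to separate $P_n$ from the shorter tuples, your argument becomes essentially the paper's.
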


\begin{proof}The example $T(\tilde E,E')$ in \cite{dij} is generated by a particular complete almost zero-dimensional space $\tilde E\simeq \mathfrak E_{\mathrm{c}}$  and a C-set $E'$ in $\tilde E$. We claim that for $X=\tilde E$ and $A=E'$, the space $T(X,A)$ is a countable union of complete C-sets. To prove this, for each $n<\omega$ put  $$T_n(X,A)=\bigcup_{k\leq n} (X\setminus A)^k\times A.$$  Write $A$ as an intersection of clopen subsets of $X$;  $A=\bigcap\{C_i:i<\omega\}$.  Then $$T_n(X,A)=\bigcap_{i<\omega}\bigcup _{k\leq n}(X\setminus A)^k\times \pi^{-1}(C_i).$$ For each $i<\omega$ note that $\bigcup _{k\leq n}(X\setminus A)^k\times \pi^{-1}(C_i)$ is clopen in $T(X,A)$ because it is a union of basic open sets and its  complement is the basic open set $(X\setminus A)^n\times \pi^{-1}(X\setminus C_i)$.  Therefore $T_n(X,A)$ is a C-set in $T(X,A)$.  Finally,  each   $T_n(X,A)$ is  complete by \cite[Claim 6]{dij}. \end{proof}

\section{$\mathfrak E$ is unstable}

To prove the main result in this section, we will need the following lemma.
\begin{ul}Let $X$ be an almost zero-dimensional space, and suppose that $(A_\alpha)_{\alpha\in T}$ is a Sierpi\'{n}ski stratification of  $X$ consisting of C-sets in $X$.  Then there exists  a Sierpi\'{n}ski stratification $(B_{\beta})_{\beta\in S}$ of $X$ such that every $B_{\beta}$ is a non-empty  C-set in $X$, and $\diam(B_{\beta})<1/\dom(\beta)$.\end{ul}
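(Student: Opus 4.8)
The goal is to refine an arbitrary Sierpiński stratification of C-sets into one where the pieces are non-empty, are C-sets, and have diameters controlled by the depth in the tree. The plan is to combine the given stratification with a fine clopen partition refinement in a topology witnessing almost zero-dimensionality, then interleave the two so that every C-set condition is preserved while forcing the diameter bound.

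First I would fix a witness topology $\mathfrak T$ for the almost zero-dimensionality of $X$ in which each $A_\alpha$ is $\mathfrak T$-closed; since $(X,\mathfrak T)$ is zero-dimensional and separable metrizable, for each $n<\omega$ there is a partition $\mathcal P_n=\{P^n_i:i<\omega\}$ of $X$ into (countably many) $\mathfrak T$-clopen sets of $\mathfrak T$-diameter, hence original diameter, less than $1/n$. Each such $P^n_i$ is clopen in $(X,\mathfrak T)$ and therefore a C-set in the original topology of $X$. The idea is to build a new tree $S$ whose nodes simultaneously record a branch $\alpha$ through $T$ (to inherit convergence from the original stratification) and, at each level, a choice of partition piece from $\mathcal P_n$ (to force the diameter bound). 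Concretely I would let a node of $S$ of length $n$ be a pair consisting of $\alpha\in T$ with $\dom(\alpha)=n$ together with a function selecting, for each $k\le n$, an index $i_k<\omega$; the associated set is $B_\beta=A_\alpha\cap\bigcap_{k\le n}P^k_{i_k}$. This is a finite intersection of C-sets, hence a C-set in $X$, and its diameter is at most $\diam(P^n_{i_n})<1/n$.

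Next I would verify the four Sierpiński-stratification axioms for $(B_\beta)_{\beta\in S}$, discarding empty nodes so that $S=\{\beta:B_\beta\ne\varnothing\}$ remains a tree (closure under initial segments holds because dropping the last coordinate of a nonempty intersection gives a superset). The covering axiom $B_\beta=\bigcup\{B_{\beta'}:\beta'\in\suc(\beta)\}$ follows because the successors range over all ways of extending $\alpha$ within $T$ and all ways of choosing the next partition index $i_{n+1}$: the union over $\suc(\alpha)$ in $T$ recovers $A_\alpha$ by axiom (3) of the original stratification, and the union over $i_{n+1}$ recovers $X$ since $\mathcal P_{n+1}$ partitions $X$. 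The convergence axiom is where the two ingredients cooperate: along an infinite branch $\lambda\in[S]$ the $T$-coordinate traces a branch of $T$ so the sets $A_{\alpha}$ along it converge to a single point by the original axiom (4), while the shrinking partition pieces force the sequence $B_{\lambda\restriction n}$ to have diameters tending to $0$; I would argue that the nonempty nested sets $B_{\lambda\restriction n}$, having diameters $<1/n$ and lying inside a convergent sequence of closed sets, converge to a point of $X$.

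The main obstacle I anticipate is handling non-emptiness and the successor structure cleanly: after intersecting with partition pieces many combinations $(\alpha,i_0,\dots,i_n)$ will be empty, and I must make sure that (i) deleting empty nodes still yields a tree and still satisfies the covering axiom for the surviving nodes, and (ii) every surviving node actually has a successor, so that the sets along each branch are genuinely nonempty and the convergence axiom has content. Part (i) is routine once I observe that a nonempty $B_\beta$ forces its truncation to be nonempty, and that the covering identity is a union of possibly-empty pieces whose nonempty members are exactly the surviving successors. For part (ii), since each nonempty $B_\beta$ is covered by its successors, at least one successor is nonempty, so branches extend indefinitely and the requirement $\diam(B_\beta)<1/\dom(\beta)$ is met at every level by construction. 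The only genuinely delicate point is confirming that a decreasing sequence of nonempty closed C-sets with vanishing diameter, sitting inside the convergent sequence $A_{\lambda\restriction n}$, has nonempty intersection converging to a point; this follows because the original stratification already delivers convergence of $(A_{\lambda\restriction n})$ to a point $p$, the diameter bound pins the $B_{\lambda\restriction n}$ down to a single limit, and one checks $p$ is that limit.
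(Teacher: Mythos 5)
There is a genuine gap, and it is located precisely where you construct the fine covers $\mathcal P_n$. You take a partition of $X$ into $\mathfrak T$-clopen sets ``of $\mathfrak T$-diameter, hence original diameter, less than $1/n$.'' That implication is false: $\mathfrak T$ is \emph{coarser} than the original topology, so a set that is small with respect to a metric for $\mathfrak T$ can be enormous in the original metric (in $\mathfrak E$, a small basic neighborhood in the $\mathbb Q^\omega$-topology contains points of arbitrarily different norm). Worse, the objects you are asking for cannot exist at all when $X$ has positive dimension: a $\mathfrak T$-clopen set is automatically clopen in the original topology (its complement is $\mathfrak T$-open, hence open), so a partition of $X$ into $\mathfrak T$-clopen sets of original diameter less than $1/n$ for every $n$ would give arbitrarily fine clopen covers and force $\dim X=0$. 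Since the lemma is only interesting for spaces like $\mathfrak E$, your first step is vacuous exactly in the cases that matter.

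The repair is to drop both the partition requirement and the detour through $\mathfrak T$: because $X$ is almost zero-dimensional, every point has a C-set neighborhood of original diameter less than $1/(n+1)$, and by the Lindel\"of property countably many of these cover $X$. This yields, for each $n$, a countable (overlapping, not $\mathfrak T$-open) cover $\{C^n_i:i<\omega\}$ of $X$ by C-sets of small original diameter, which is what the paper uses. Your argument never actually needs disjointness --- in the covering axiom you only use that $\mathcal P_{n+1}$ covers $X$ --- so substituting these covers for your partitions leaves the rest of your proof intact. With that substitution your construction (interleaving the tree $T$ with index sequences, setting $B_\beta=A_\alpha\cap\bigcap_k C^k_{i_k}$, discarding empty nodes, and deducing convergence from the vanishing diameters together with axiom (4) for $(A_{\lambda\restriction n})$) is exactly the paper's proof.
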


\begin{proof}For each $n<\omega$ let   $\{C^n_i:i<\omega\}$ be a C-set covering of $X$ such that $\diam(C^n_i)<1/(n+1)$ for all $i<\omega$. For every $\beta=\alpha*\gamma\in T*\omega^{<\omega}$ (where the $*$ operation is defined as in the proof of Theorem 1), define $$B_\beta=A_{\alpha}\cap \bigcap_{n<\dom(\alpha)} C^n_{\gamma(n)}.$$ Let $S=\{\beta\in T*\omega^{<\omega} :B_{\beta}\neq\varnothing\}$. Then $(B_{\beta})_{\beta\in S}$ is as desired.\end{proof}

\begin{ut}Every first category  Erd\H{o}s space factor contains a neighborhood which is covered by countably many nowhere dense C-sets.\end{ut}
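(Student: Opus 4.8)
The plan is to use Theorem 1 and Lemma 3 to replace the factor structure by a concrete Sierpi\'{n}ski stratification, and then to run two Baire category arguments on the associated tree. Since $X$ is first category, write $X=\bigcup_{n<\omega}N_n$ where each $N_n$ is closed and nowhere dense. By Theorem 1 $X$ has a Sierpi\'{n}ski stratification of C-sets, and by Lemma 3 I may assume it is a stratification $(B_\beta)_{\beta\in S}$ in which each $B_\beta$ is a non-empty C-set with $\diam(B_\beta)<1/\dom(\beta)$. Along any branch the sets $B_{\lambda\restriction k}$ are nested, closed, non-empty and of vanishing diameter, so $\bigcap_k B_{\lambda\restriction k}$ is a single point; this defines a map $f\colon [S]\to X$ sending $\lambda$ to that point. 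First I would record the routine facts that $[S]$ is a Polish (zero-dimensional) space, that $f$ is a continuous surjection, and that $f\bigl(\{\lambda\in[S]:\lambda\restriction\dom(\beta)=\beta\}\bigr)=B_\beta$ for every $\beta\in S$.

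Next I would split the tree. Call $\beta\in S$ \emph{fat} if $\operatorname{int}(B_\beta)\neq\varnothing$ and \emph{thin} otherwise; since $B_{\beta'}\subseteq B_\beta$ whenever $\beta\prec\beta'$, the fat nodes form a subtree $F\subseteq S$ and every descendant of a thin node is thin. Put $M=\bigcup\{B_\beta:\beta\text{ thin}\}$, a countable union of nowhere dense C-sets in $X$. Every point of $X$ lies on some branch, and that branch either meets a thin node (putting the point in $M$) or lies in $[F]$; hence $X=M\cup f([F])$. The whole theorem reduces to showing that $M$ has non-empty interior, for then that interior is the required neighborhood.

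The key step is to prove $\operatorname{int}(M)\neq\varnothing$, and here I would invoke the first category hypothesis a second time, now inside the Polish space $[F]$ (which is closed in $[S]$). If $[F]=\varnothing$ then $X=M$ and we are done, so assume $[F]\neq\varnothing$. The sets $f^{-1}(N_n)\cap[F]$ are closed and cover $[F]$, so by the Baire category theorem one of them has non-empty interior in $[F]$; that is, there are $n_0<\omega$ and a fat node $\beta_0\in F$ with $f\bigl(\{\lambda\in[F]:\lambda\restriction\dom(\beta_0)=\beta_0\}\bigr)\subseteq N_{n_0}$. I would then verify the crucial inclusion $B_{\beta_0}\setminus N_{n_0}\subseteq M$: if $x\in B_{\beta_0}$ and $x\notin M$, then every branch through $\beta_0$ mapping to $x$ avoids all thin nodes and hence lies in $[F]$, forcing $x\in N_{n_0}$. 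Since $\beta_0$ is fat, $U_0:=\operatorname{int}(B_{\beta_0})$ is non-empty, and because $N_{n_0}$ is nowhere dense the open set $U_0\setminus N_{n_0}$ is non-empty; by the inclusion it is contained in $M$. Thus $U_0\setminus N_{n_0}$ is a neighborhood covered by the countably many nowhere dense C-sets $\{B_\beta:\beta\text{ thin}\}$.

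I expect the main obstacle to be exactly this last step. The naive attempt to cover a neighborhood by the thin pieces meeting it fails because of the fat nodes (whose boundaries need not be C-sets), and the non-obvious move is to apply Baire category \emph{within} $[F]$ to trap the fat-branch image of some fat $B_{\beta_0}$ inside a single nowhere dense $N_{n_0}$, after which the remainder of $B_{\beta_0}$ is automatically swept up by $M$. The surprising feature, that the meager set $M$ nonetheless has non-empty interior, is precisely what separates Erd\H{o}s space factors from the counterexamples, and it is driven entirely by the Polishness of $[S]$ supplied by the Sierpi\'{n}ski stratification.
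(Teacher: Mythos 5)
Your proof is correct, and it takes a genuinely different route from the paper's. The paper argues by contrapositive: assuming no neighborhood of $X$ is covered by countably many nowhere dense C-sets, it fixes an arbitrary countable family of closed nowhere dense sets $X_i$ and runs a fusion argument down the tree, at each stage invoking that assumption to pick a node of prescribed level whose C-set has non-empty interior, meets a small ball inside the previous set, and (via the diameter bound from the refinement lemma, which is Lemma~7 in the paper, not Lemma~3) misses $X_i$; the limit point of the resulting branch shows $X$ is not first category. You instead argue directly: you pass to the branch space $[S]$ with the induced continuous surjection $f$, introduce the fat/thin decomposition of the tree, and apply the Baire category theorem inside the Polish space $[F]$ of fat branches to trap $f\bigl(\{\lambda\in[F]:\lambda\restriction\dom(\beta_0)=\beta_0\}\bigr)$ inside a single $N_{n_0}$; the inclusion $B_{\beta_0}\setminus N_{n_0}\subseteq M$ then exhibits the neighborhood $\operatorname{int}(B_{\beta_0})\setminus N_{n_0}$ explicitly. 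The engines are the same (Theorem~1, the diameter refinement, and a Baire-type selection of nodes with interior), but your version externalizes the category argument to the branch space and produces the covered neighborhood constructively, while the paper's stays inside $X$ and yields only a contradiction; the decomposition $X=M\cup f([F])$ is a nice structural byproduct that the paper's proof does not make visible. All the supporting claims you flag as routine (that $[F]$ is closed in the Polish space $[S]$, that $f$ maps the cylinder over $\beta$ onto $B_\beta$, and that a point of $B_{\beta_0}$ outside $M$ lifts to a branch in $[F]$ through $\beta_0$) do check out against properties (3) and (4) of a Sierpi\'nski stratification.
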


\begin{proof}Let $X$ be an Erd\H{o}s space factor.   Suppose that no neighborhood in $X$ can be covered by countably many nowhere dense C-sets of $X$.  We will show that $X$ is not first category.  To that end, let $\{X_i:i<\omega\}$ be any (countable) collection of closed nowhere dense subsets of $X$. We will show $X\neq \bigcup \{X_i:i<\omega\}$.

 By Theorem 1 there is a Sierpi\'{n}ski stratification $(A_\alpha)_{\alpha\in T}$ of $X$ such that every $A_\alpha$ is a C-set in $X$. By Lemma 7 we may assume that the $A_\alpha$'s are non-empty and $\diam(A_{\alpha})<1/\dom(\alpha)$ for each $\alpha\in T$. We will now inductively define a sequence $(\alpha^i)\in T^\omega$ and integers $N_0<N_1<\ldots$ such that for every $i<\omega$:
\begin{itemize}\renewcommand{\labelitemi}{\scalebox{.5}{{$\blacksquare$}}}
\item $\dom(\alpha^i)=N_i$,
\item  $\alpha^{i+1}\restriction N_i=\alpha^i$,
\item $A_{\alpha^i}$ has non-empty interior in $X$, and
\item $A_{\alpha^i}\cap X_i=\varnothing$.
\end{itemize}
To begin the construction,   let $x\in X\setminus X_0$.  Choose $N_0\geq 1$ such that $$2/N_0<\varepsilon:=d(x,X_0).$$  Since $\{A_\alpha:\alpha\in T,\; \dom(\alpha)=N_0\text{, and }A_\alpha\cap B(x,\varepsilon/2)\neq\varnothing\}$ is a countable C-set covering of the neighborhood $B(x,\varepsilon/2)$, there exists $\alpha^0\in T$ such that $\dom(\alpha^0)=N_0$, $A_{\alpha^0}$ contains a non-empty open subset of $X$, and $A_{\alpha^0}\cap  B(x,\varepsilon/2)\neq\varnothing$. The last condition implies $A_{\alpha^0}\cap X_0=\varnothing$ because if $y\in A_{\alpha^0}$ then $$d(x,y)\leq  \varepsilon/2+ \diam(A_{\alpha^0})<\varepsilon/2+1/\dom(\alpha^0)=\varepsilon/2+1/N_0<\varepsilon.$$

Now suppose $\alpha^i\in T$ and $N_i$ have been appropriately defined for a given $i<\omega$.   Since  $A_{\alpha^i}$ contains a non-empty open subset of $X$, and $X_{i+1}$ is nowhere dense in $X$, there exists $x\in X$ and $\varepsilon>0$ such that $B(x,\varepsilon)\subset  A_{\alpha^i}\setminus X_{i+1}$. Choose $N_{i+1}>N_i$ such that $2/N_{i+1}<\varepsilon$. Observe  that $$\{A_\alpha:\alpha\in T,\;\dom(\alpha)=N_{i+1},\; \alpha\restriction N_i=\alpha^i,\text{ and }A_\alpha\cap B(x,\varepsilon/2)\neq\varnothing\}$$ covers $B(x,\varepsilon/2)$. So there exists $\alpha^{i+1}\in T$ such that $\dom(\alpha^{i+1})=N_{i+1}$, $\alpha^{i+1}\restriction N_i=\alpha^i$, $A_{\alpha^{i+1}}$ has non-empty interior in  $X$, and $A_{\alpha^{i+1}}\cap  B(x,\varepsilon/2)\neq\varnothing$ (hence $A_{\alpha^{i+1}}\cap X_{i+1}=\varnothing$). Thus the construction can be continued.

Finally, let $$\lambda=\bigcup_{i<\omega} \alpha^i\in [T].$$ By the convergence property (4) in the definition of a Sierpi\'{n}ski stratification, the sets $A_{\lambda\restriction 0},A_{\lambda\restriction 1},\ldots$ converge to a point $x_{\lambda}\in X$. Then $$x_{\lambda}\in \bigcap_{n<\omega} A_{\lambda \restriction n} =\bigcap_{i<\omega} A_{\alpha^i}\subset X\setminus \bigcup_{i<\omega} X_i.$$ Therefore $X\neq \bigcup \{X_i:i<\omega\}$. \end{proof}

We are now ready to present counterexamples to Question 1.  

Our first example is elementary and will require two basic lemmas concerning the topologies of $\ell^2$ and $\mathfrak E_{\mathrm{c}}$.

\begin{ul}Let $x,x_0,x_1,\ldots\in \ell^2$.  Suppose $x_n\to  x$ in  $\mathbb R ^\omega$ and $\|x_n\|<\|x\|+ 1/n$ for each $n<\omega$. Then $x_n\to x$ in $\ell^2$.\end{ul}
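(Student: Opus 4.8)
The plan is to decompose the $\ell^2$-norm into a finite "head" and an infinite "tail," control the head using coordinatewise convergence and the tail using the norm hypothesis, and let a diagonal argument close the gap. Since $\|x_n-x\|^2 = \sum_{k<\omega}|x_n(k)-x(k)|^2$, I want to show this sum tends to $0$. The coordinatewise convergence $x_n\to x$ in $\mathbb{R}^\omega$ gives me control of any fixed finite collection of coordinates, but it says nothing uniform about the tail; that is where the norm bound $\|x_n\|<\|x\|+1/n$ must do the work.

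\medskip

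\noindent First I would fix $\varepsilon>0$ and choose $K<\omega$ so large that the tail of $x$ is small, say $\sum_{k\geq K}|x(k)|^2<\varepsilon$. The key algebraic identity I would exploit is the expansion
$$\|x_n-x\|^2 = \|x_n\|^2 - 2\langle x_n,x\rangle + \|x\|^2.$$
The hypothesis $\|x_n\|<\|x\|+1/n$ gives $\|x_n\|^2 < \|x\|^2 + 2\|x\|/n + 1/n^2$, so the first and third terms together are bounded by $2\|x\|^2 + 2\|x\|/n + 1/n^2$. It then remains to show that $\langle x_n,x\rangle \to \|x\|^2$, because substituting this limit yields $\limsup_n\|x_n-x\|^2 \leq 2\|x\|^2 - 2\|x\|^2 = 0$.

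\medskip

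\noindent To prove $\langle x_n,x\rangle\to\|x\|^2$, I would split the inner product at the index $K$. The head $\sum_{k<K} x_n(k)x(k)$ converges to $\sum_{k<K}|x(k)|^2$ by coordinatewise convergence (a finite sum of products of convergent sequences). For the tail $\sum_{k\geq K} x_n(k)x(k)$, the Cauchy--Schwarz inequality gives the bound
$$\Bigl|\,\sum_{k\geq K} x_n(k)x(k)\,\Bigr| \leq \|x_n\|\cdot\Bigl(\sum_{k\geq K}|x(k)|^2\Bigr)^{1/2} < \bigl(\|x\|+1/n\bigr)\sqrt{\varepsilon},$$
where the norm bound keeps $\|x_n\|$ uniformly bounded. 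Since $\varepsilon$ is arbitrary and the head limit recovers $\|x\|^2$ as $K\to\infty$, this shows $\langle x_n,x\rangle\to\|x\|^2$, completing the argument.

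\medskip

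\noindent \emph{The main obstacle} is the tail of the inner product: coordinatewise convergence alone cannot control $\sum_{k\geq K} x_n(k)x(k)$ uniformly in $n$, since the "mass" of $x_n$ could in principle escape to high coordinates. The norm hypothesis is exactly what prevents this escape, and the delicate point is that I only have $\|x_n\| < \|x\| + 1/n$ rather than genuine norm convergence $\|x_n\|\to\|x\|$. Fortunately the one-sided bound is all that is needed: it caps $\|x_n\|^2$ from above in the expansion, and combined with the lower semicontinuity estimate $\langle x_n,x\rangle\to\|x\|^2$ it forces $\|x_n-x\|\to 0$ without requiring the reverse norm inequality.
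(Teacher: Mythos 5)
Your proof is correct, but it takes a different route from the paper's. The paper outsources the analytic core to Roberts' lemma from \cite{rob} (coordinatewise convergence together with $\|x_n\|\to\|x\|$ implies $\ell^2$-convergence), so it only has to verify norm convergence: the upper bound is the hypothesis, and the lower bound $\liminf_n\|x_n\|\geq\|x\|$ comes from the fact that $\{y\in\mathbb R^\omega:\|y\|>\|x\|-\varepsilon\}$ is open in $\mathbb R^\omega$ (lower semicontinuity of the norm in the product topology). You instead give a self-contained computation: expand $\|x_n-x\|^2=\|x_n\|^2-2\langle x_n,x\rangle+\|x\|^2$, cap $\|x_n\|^2$ using the one-sided hypothesis, and prove $\langle x_n,x\rangle\to\|x\|^2$ by a head/tail split with Cauchy--Schwarz on the tail. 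This is essentially the standard proof of the Radon--Riesz property specialized to this situation, and it is airtight; note that you never need $\|x_n\|\to\|x\|$, only the upper bound, since $\langle x_n,x\rangle\to\|x\|^2$ already forces $\liminf_n\|x_n\|\geq\|x\|$ via Cauchy--Schwarz. What your version buys is independence from the cited lemma (the whole argument fits on the page); what the paper's version buys is brevity and a cleaner conceptual statement (reduce to norm convergence, then quote a known equivalence). Both are valid; yours is arguably the more instructive write-up if the reader does not have \cite{rob} at hand.
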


\begin{proof}
By  coordinate-wise convergence of $(x_n)$ and the lemma in \cite{rob}, it suffices to show $\|x_n\|\to \|x\|$.  To that end, let $\varepsilon>0$.  Since $\{y\in \mathbb R ^\omega:\|y\|>\|x\|-\varepsilon\}$ contains $x$ and is open in $\mathbb R ^\omega$,  there exists $N$ such that $1/N<\varepsilon$ and $\|x_n\|>\|x\|-\varepsilon$ for all $n\geq N$.  Then $\|x\|-\varepsilon<\|x_n\|<\|x\|+\varepsilon$ for all $n\geq N$. Thus $\|x_n\|\to \|x\|$. \end{proof}

\begin{ul}$\mathfrak E_{\mathrm{c}}$ has a basis of neighborhoods of the form $\{x\in C:\|x\|\leq q\}$ where $C$ is clopen in $\mathbb P ^\omega$ and $q\in \mathbb Q$. \end{ul}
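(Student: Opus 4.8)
=== PROOF PROPOSAL FOR LEMMA 10 ===

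\textbf{Goal and setup.} The plan is to show that sets of the form $\{x\in C:\|x\|\leq q\}$, with $C$ clopen in $\mathbb P^\omega$ and $q\in\mathbb Q$, form a neighborhood basis for $\mathfrak E_{\mathrm{c}}=\{x\in\ell^2:x_n\in\mathbb P\text{ for all }n\}$. Recall the standard fact (from \cite{erd}) that the almost zero-dimensionality of $\mathfrak E_{\mathrm{c}}$ is witnessed by the product topology it inherits from $\mathbb P^\omega$, and that in $\ell^2$ the norm $\|\cdot\|$ is lower semicontinuous with respect to the product topology on $\mathbb R^\omega$. The first thing I would do is fix a point $x\in\mathfrak E_{\mathrm{c}}$ and an arbitrary $\ell^2$-open neighborhood $U$ of $x$, and aim to trap $x$ inside a basic set of the stated form that is contained in $U$.

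\textbf{Constructing the trapping set.} Since $U$ is open in the $\ell^2$-topology, choose $r>0$ with the $\ell^2$-ball $B(x,r)\subset U$. I would pick $q\in\mathbb Q$ with $\|x\|<q<\|x\|+r/2$; then $x$ satisfies $\|x\|\leq q$ strictly. The key is to simultaneously choose a product-clopen set $C\ni x$ in $\mathbb P^\omega$ so small that every $y\in C$ with $\|y\|\leq q$ lands in $B(x,r)$. Because $\sum_n x_n^2=\|x\|^2$ converges, I would fix $N$ so that the tail $\sum_{n\geq N}x_n^2$ is small, and then take $C$ to be a basic product-clopen box that controls the first $N$ coordinates of $y$ to within a small tolerance of the $x_n$'s (using that $\mathbb P^\omega$ is zero-dimensional, so coordinate-wise rational-endpoint boxes are clopen). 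The estimate I expect to use is a split of $\|x-y\|^2=\sum_{n<N}(x_n-y_n)^2+\sum_{n\geq N}(x_n-y_n)^2$: the first sum is made small by the box $C$, and the second is controlled by writing $\sum_{n\geq N}y_n^2=\|y\|^2-\sum_{n<N}y_n^2\leq q^2-\sum_{n<N}y_n^2$, which is close to the small tail $\sum_{n\geq N}x_n^2$ once $C$ pins down the first $N$ coordinates and $q$ is close to $\|x\|$.

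\textbf{The main obstacle.} The delicate point is that a product-clopen box only controls finitely many coordinates, so it gives no direct bound on the infinite tail $\sum_{n\geq N}(x_n-y_n)^2$. The norm bound $\|y\|\leq q$ is exactly what supplies tail control: it forces $\sum_{n\geq N}y_n^2$ to be small, and combined with smallness of $\sum_{n\geq N}x_n^2$ (via $(a-b)^2\leq 2a^2+2b^2$) this bounds the tail of the difference. I would make this quantitative by choosing, in order, first $N$ large (tail of $x$ small), then $q$ close to $\|x\|$, then the box $C$ tight on the first $N$ coordinates, chaining the estimates so that $\|x-y\|<r$ for all $y\in\{z\in C:\|z\|\leq q\}$. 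This is essentially the mechanism behind Lemma 9, where a product-convergent sequence with norms converging to $\|x\|$ is upgraded to $\ell^2$-convergence; indeed one could phrase the verification via Lemma 9 applied to any sequence inside the basic sets.

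\textbf{Closing the argument.} Finally I would check the reverse containment to confirm these are genuine neighborhoods: each $\{x\in C:\|x\|\leq q\}$ contains $x$ (by the strict inequalities arranged above) and, by the estimate, is contained in $B(x,r)\subset U$, so it is a neighborhood of $x$ sitting inside $U$. Since $U$ was an arbitrary $\ell^2$-neighborhood, these sets form a basis at $x$, and as $x$ was arbitrary they form a neighborhood basis for $\mathfrak E_{\mathrm{c}}$. The only remaining routine remark is that each such set is indeed of the advertised form, which is immediate from its definition.
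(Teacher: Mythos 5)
Your proposal is correct, and it reaches the conclusion by a different mechanism than the paper. The paper's proof is sequential: it fixes a decreasing chain $C_0\supset C_1\supset\cdots$ of clopen sets converging to $y$ in $\mathbb P^\omega$ together with rationals $q_n$ with $\|y\|<q_n<\|y\|+1/n$, and then argues that some $\{x\in C_n:\|x\|\leq q_n\}$ must land inside $U$, since otherwise one could pick witnesses $x_n$ that converge to $y$ in $\ell^2$ by Lemma 9 (which itself outsources the analytic content to Roberts' lemma that coordinatewise convergence plus norm convergence implies $\ell^2$-convergence). You instead build a single trapping set directly, with an explicit quantitative estimate: choose $N$ to make the tail of $x$ small, then $q$ close to $\|x\|$, then a finite-coordinate clopen box, and control $\sum_{n\geq N}(x_n-y_n)^2$ via $(a-b)^2\leq 2a^2+2b^2$ together with $\sum_{n\geq N}y_n^2\leq q^2-\sum_{n<N}y_n^2$. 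I checked the chaining of the estimates (one needs Cauchy--Schwarz or a similar bound to compare $\sum_{n<N}y_n^2$ with $\sum_{n<N}x_n^2$, which your tolerance on the box supplies) and it closes correctly; your order of quantifiers is the right one. What the paper's route buys is brevity, since Lemma 9 is already in place and the hard inequality is cited rather than proved; what your route buys is a self-contained argument that makes the role of the norm bound as tail control completely explicit, and it in effect reproves the Radon--Riesz-type fact underlying Lemma 9. Your final remark that each such set is genuinely a neighborhood (because $\{y:\|y\|<q\}$ is $\ell^2$-open and the strict inequality $\|x\|<q$ holds) matches the paper's closing sentence.
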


\begin{proof}Let $y\in \mathfrak E_{\mathrm{c}}$ and let $U$ be any open subset of $\mathfrak E_{\mathrm{c}}$ with $y\in U$. There exists $(q_n)\in \mathbb Q^\omega$ such that  $\|y\|<q_n<\|y\|+1/n$ for every $n<\omega$. There is also a sequence $C_0\supset C_1\supset\ldots$ of clopen subsets of $\mathbb P ^\omega$ which converges to $y$ in the topology of $\mathbb P ^\omega$.  If $x_n\in \{x\in C_n:\|x\|\leq q_n\}$ for every $n<\omega$, then $x_n\to y$ by Lemma 9.  Thus there exists $n<\omega$ such that  $\{x\in C_n:\|x\|\leq q_n\}\subset U$. Clearly $\{x\in C_n:\|x\|\leq q_n\}$ is an $\mathfrak E_{\mathrm{c}}$-neighborhood of $y$.\end{proof}

Let $\mathfrak E'=\{x\in \ell^2:x_n\in \mathbb Q+\sqrt{2}\text{ for all }n<\omega\}$.

\begin{ue}$X:=\mathfrak E'\cup \{x\in \mathfrak E_{\mathrm{c}}:\|x\|\in \mathbb Q\}$ is an almost zero-dimensional $F_{\sigma\delta}$-space which is not an Erd\H{o}s space factor.\end{ue}

\begin{proof}Note that $\mathfrak E'\simeq \mathfrak E$ and $\{x\in \mathfrak E_{\mathrm{c}}:\|x\|=q\}$ is nowhere dense in $X$ for each $q\in \mathbb Q$.  So $X$ is first category.  Further, $X$ is the union of two $F_{\sigma\delta}$-subsets of $\mathfrak E_{\mathrm{c}}$ and is therefore almost zero-dimensional and absolute $F_{\sigma\delta}$. In order to apply Theorem 8, we need to show that no neighborhood in $X$ can be covered by countably many nowhere dense C-sets of $X$.  Let $A$ be any neighborhood in $X$. By Lemma 10 we may assume that  $A=\{x\in C:\|x\|\leq q\}$ where $C$ is clopen in $\mathbb P ^\omega$ and $q\in \mathbb Q$.   For a contradiction, suppose that $A\subset \bigcup \{A_n:n<\omega\}$ where each $A_n$ is a nowhere dense C-set in $X$. Note that $\{x\in C:\|x\|=q\}$ is complete, and its topology as a subspace of $X$ is the same as the topology it inherits from $\mathbb P ^\omega$. By Baire's theorem there is a clopen set $B\subset C$ and $n<\omega$ such that  $\varnothing\neq \{x\in B:\|x\|=q\}\subset A_n$. Then  the open set $\{x\in B:\|x\|<q\}$ is also non-empty. Since $A_n$ is nowhere dense, there exists $x\in X\cap B\setminus A_n$ such that $\|x\|<q$.  Let $O$ be a clopen subset of $X$ such that $x\in O$ and $O\cap A_n=\varnothing$.  Then $O\cap A\cap \mathfrak E'$ is a non-empty bounded clopen subset of $\mathfrak E'$. This contradicts a key property of the Erd\H{o}s space \cite{dims}, namely that $\mathfrak E'\cup \{\infty\}$ is connected.\end{proof}

Our next two examples are from complex dynamics. Define $f(z)=\exp(z)-1$ for each $z\in \mathbb C$. The Julia set $J(f)$ is a Cantor bouquet of rays in the complex plane \cite{aa}, and has  a natural endpoint set $E(f)$ which is almost zero-dimensional and complete.  In fact, $E(f)$ is  homeomorphic to $\mathfrak E_{\mathrm{c}}$ \cite{31}. We let $$f^n=\underbrace{f\circ f\circ\ldots\circ f}_\text{$n$ times}$$ denote the $n$-fold composition of $f$.

Recall that a space $X$ is \textbf{nowhere $\sigma$-complete} if no neighborhood in $X$ can be written as a countable union of complete subspaces.  This is equivalent to saying that no neighborhood in $X$ is absolute $G_{\delta\sigma}$ (i.e.\ $X$ is nowhere $G_{\delta\sigma}$). 

\begin{ue}There is an almost zero-dimensional $F_{\sigma\delta}$-space that is nowhere $G_{\delta\sigma}$ and is not an Erd\H{o}s space factor.\end{ue}

\begin{proof} The escaping endpoint set $\dot E(f):=\{z\in E(f):f^{n}(z)\to\infty\}$ is almost zero-dimensional and first category \cite{lip}, $F_{\sigma\delta}$ and nowhere $G_{\delta\sigma}$ \cite{lip5}, and no neighborhood in $\dot E(f)$  can be covered by countably many nowhere dense C-sets of $\dot E(f)$ \cite[Remark 5.3]{lip3}. By Theorem 8, $\dot E(f)$ is not an Erd\H{o}s space factor.\end{proof}

By contrast, a space $X$ is \textbf{strongly $\sigma$-complete} if $X$ can be written as a countable union of closed complete subspaces. 

\begin{ue}There is a strongly $\sigma$-complete almost zero-dimensional space  which is not an Erd\H{o}s space factor.\end{ue}

\begin{proof}Consider the set  $\hat E(f):=\{z\in E(f):\overline{\{f^n(z):n<\omega\}}\neq J(f)\}$ consisting of all endpoints whose (forward) orbits are not dense in $J(f)$. By \cite[Lemma 1]{bak},  $\hat E(f)$ is a first category  $F_{\sigma}$-subset of  $E(f)$.  In particular, $\hat E(f)$ is strongly $\sigma$-complete and absolute $F_{\sigma\delta}$. Clearly $\dot E(f)\subset \hat E(f)$, and $\dot E(f)$ is dense in $E(f)$ by Montel's theorem. Thus no neighborhood in $\hat E(f)$ can be covered by countably many nowhere dense C-sets of $\hat E(f)$.  By Theorem 8, $\hat E(f)$ is not an Erd\H{o}s space factor.\end{proof}

\begin{ur}In light of Corollary 5,  $\hat E(f)$ is an example of a strongly $\sigma$-complete almost zero-dimensional space  which cannot be written as a countable union of complete C-sets.\end{ur}

\begin{ur}van Mill proved that  $\mathbb Q\times \mathbb P$ is the  unique  zero-dimensional space which is strongly $\sigma$-complete, nowhere $\sigma$-compact, and nowhere complete  \cite{van}. There is no such classification of almost zero-dimensional spaces, as $\hat E(f)$ is strongly $\sigma$-complete,  nowhere $\sigma$-compact, nowhere complete, and is not a $\mathbb Q$-product. 
\end{ur}



\begin{thebibliography}{HD}


\bibitem{aa}J. M. Aarts, L. G. Oversteegen, The geometry of Julia sets, Trans. Amer. Math. Soc.
338 (1993), no. 2, 897--918.



\bibitem{bak}I. N. Baker and P. Dom\'{i}nguez, Residual Julia Sets, Journal of Analysis, Vol. 8, 2000, pp. 121--137.



\bibitem{stab}J. J. Dijkstra, J. van Mill and J. Steprans, Complete Erd\H{o}s space is unstable, Mathematical Proceedings of the Cambridge Philosophical Society 137 (2004), 465--473.


\bibitem{dij}J. J. Dijkstra, A homogeneous space that is one-dimensional but not cohesive, Houston J. Math. 32 (2006), no. 4, 1093--1099. 


\bibitem{erd}J. J. Dijkstra, J. van Mill, Erd\H{o}s space and homeomorphism groups of manifolds, Mem. Amer. Math. Soc. 208 (2010), no. 979.

\bibitem{erd2}J.J. Dijkstra, Characterizing stable complete Erd\H{o}s space, Israel J. Math. 186 (2011) 477--507.

\bibitem{dims}P. Erd\H{o}s, The dimension of the rational points in Hilbert space, Ann. of Math. (2) 41 (1940), 734--736.

\bibitem{31}K. Kawamura, L. G. Oversteegen, and E. D. Tymchatyn, On homogeneous totally disconnected $1$-dimensional spaces, Fund. Math. 150 (1996), 97--112.


\bibitem{lip}D. S. Lipham, A note on the topology of escaping endpoints,  Ergodic Theory Dynam. Systems, (2020)  \url{https://doi.org/10.1017/etds.2019.111}.

\bibitem{lip3}D. S. Lipham, Distinguishing endpoint sets from Erd\H{o}s space, arXiv preprint (2020);  \url{https://arxiv.org/pdf/2006.04783.pdf}.

\bibitem{lip5}D. S. Lipham, Another almost zero-dimensional space of exact multiplicative class 3, arXiv preprint (2020);   \url{https://arxiv.org/pdf/2010.13876.pdf}.

\bibitem{rob}J. H. Roberts, The rational points in Hilbert space, Duke Math. J. 23 (1956), 488--491. 

\bibitem{si}W. Sierpi\'{n}ski, Sur une d\'{e}finition topologique des ensembles $F_{\sigma\delta}$, Fund. Math. 6 (1924), 24--29.


\bibitem{vee}F. van Engelen, Countable products of zero-dimensional absolute F$_{\sigma\delta}$ spaces, Indag. Math.
87:4 (1984) 391--399. 

\bibitem{vevm}F. van Engelen and J. van Mill, Borel sets in compact spaces: Some Hurewicz-type theorems, Fund. Math. 124 (1984), 271--286.


\bibitem{van} J. van Mill, Characterization of some zero-dimensional separable metric spaces, Trans. Amer. Math.
Soc. 264 (1981), 205--215.

\bibitem{zar}A. Zaragoza, Symmetric products of Erd\H{o}s space and complete Erd\H{o}s space, Topology Appl. 284 (2020), 1--10.

\end{thebibliography}
\end{document}